\begin{document}

\newcommand{\qed}{\hfill \mbox{\raggedright \rule{.08in}{.086in}}}

\title{Linear Consensus Algorithms Based on Balanced Asymmetric Chains}

\author{Sadegh Bolouki and Roland P. Malham\'e
\thanks{GERAD and Department of Electrical Engineering, \'Ecole Polytechniqe de Montr\'eal, Montreal,
Canada, e-mail: \{sadegh.bolouki,roland.malhame\}@polymtl.ca}}

\maketitle


\begin{abstract}
Multi agent consensus algorithms  with update steps based on so-called balanced asymmetric chains, are analyzed. For such algorithms it is shown that (i) the set of accumulation points of states is finite, (ii) the asymptotic unconditional occurrence of single consensus or multiple consensuses is directly related to the property of absolute infinite flow for the underlying update chain. The results are applied to well known consensus models.
\end{abstract}


\IEEEpeerreviewmaketitle


\section{Introduction}

\newtheorem{theorem}{Theorem}
\newtheorem{definition}{Definition}
\newtheorem{lemma}{Lemma}
\newtheorem{corollary}{Corollary}
\newtheorem{remark}{Remark}
\newtheorem{proposition}{Proposition}

Consensus problems in multi-agent systems have gained increasing attention in various research communities.
Many of the consensus algorithms in the literature can be described by linear update equations:
\begin{equation}
  X(n+1) = A_n X(n), \,\,\, n \geq 0,
  \label{modelss}
\end{equation}
where $X(n)$ is the vector of states (the value of an unknown parameter or probability) and $A_n$ for every $n \geq 0$ is a stochastic matrix, i.e., each row of $A_n$ sums to 1. $A_n$ will be referred to as the matrix of interaction coefficients. Distributed averaging algorithms were first introduced by DeGroot in \cite{DeGroot:74}. Later, Chatterjee and Senata \cite{Chatterjee:77} considered the same class of consensus problems with time-varying interaction coefficients. The authors found sufficient conditions for consensus via backward products of stochastic matrices. Results of \cite{Chatterjee:77} were generalized in \cite{Tsit:84,Tsit:86,Tsit:89a}, whereby more general conditions for consensus to occur were provided. Unlike \cite{DeGroot:74,Chatterjee:77}, in the model considered in \cite{Tsit:84,Tsit:86,Tsit:89a}, communication links between individuals are not necessarily bidirectional. Briefly stated, sufficient conditions for convergence in \cite{Tsit:86,Tsit:84,Tsit:89a} are, non vanishing interaction rates, and continuously repeated connectivity of the integrated communication graph. 
As an alternative model, Vicsek et al. \cite{Vicsek:95} considered a system of multiple agents moving in the plane with the same speed but different headings, where heading of agents are updated according to an averaging algorithm. Consensus was observed in simulations. Jadbabaie et al. in \cite{Jadba:03} analyzed a linearized version of the Viscek model and provided conditions under which consensus occurs. The authors showed that consensus occurs exponentially fast if there exists an infinite sequence of contiguous, nonempty, bounded, time-intervals $[n_i,n_{i+1})$, $i \geq 0$, starting at $n_0=0$, with the property that across each such interval, all agents are linked together (via a chain of neighbors). Following \cite{Jadba:03}, many authors tried to generalize the consensus results by employing different techniques (see \cite{Hend:11} and references therein). 
Hendrickx et al. in recent work \cite{Hend:11} generalized the previous results by introducing an important property of stochastic matrices, the so-called cut-balance property. The authors also considered the multiple consensus problem. However, to obtain the main results, in the discrete case, a uniform positive lower bound for non zero interaction coefficients still appeared to be necessary, unlike in the corresponding continuous time theorems. Recently, Touri and Nedi\'c \cite{Touri:10a,Touri:10b,Touri:3,Touri:11c} have approached the consensus problem via the backward product of stochastic matrices as in \cite{Chatterjee:77}. For a class of random stochastic matrices, they have derived necessary and sufficient conditions for a.s. ergodicity. Existing results on consensus in discrete time distributed averaging algorithms are subsumed in \cite{Touri:10b,Touri:11c}.

In this note, by introducing a property of stochastic chains, herein called balanced asymmetry, we derive equivalent conditions for unconditional consensus and multiple consensus to occur in a class of multi-agent systems with dynamics (\ref{modelss}). In the process, we also establish that if the balanced asymmetry property is satisfied, the set of accumulation points of states is finite.

The rest of this paper is organized as follows: Essential notions that are required to state the main results are defined and illustrated in Section II. Main results on unconditional consensus and multiple consensus are presented in Section III. The relationship of our results to existing results in the literature as well as their applications to known models are discussed in Section IV. Concluding remarks end the paper in Section V.

\subsection{Notation}

\label{notations}

  Throughout this article, we adopt the following notation:

  \begin{itemize}

  \item $S$ is the set of agents and $s = |S|$ is the number of agents.

  \item $n$ stands for discrete time index.

  \item $X(n) = [X_1(n) \cdots X_s(n)]^T$, $n \geq 0$, is the state vector.

  \item For every $n \geq 0$, $(1_n,2_n,\ldots,s_n)$ is a permutation of $\{1,2,\ldots,s\}$ such that agent $i_n$ ($1 \leq i \leq s$) has the $i$th least state value among all agents at time $n$.

  \item $z_i(n)=X_{i_n}(n)$ is the $i$th least number among $X_1(n),\ldots,X_s(n)$. Particularly, $z_1(n)$ and $z_s(n)$ are the state values of agents associated with the least and the greatest state values at time $n$ respectively.

  \item $A_n$, $n \geq 0$ is the matrix of interaction rates $a_{ij}(n)$, $1 \leq i,j \leq s$.


  \end{itemize}

\section{Notions and Terminology}

\begin{definition}
  Consider a multi-agent system with dynamics (\ref{modelss}). By unconditional consensus in system (\ref{modelss}), we mean that no matter at what instant or at what values states are initialized, all $X_i(n)$'s, $i=1,\ldots,s$, converge to identical values as $n$ goes to infinity.
\end{definition}
We now define ergodicity according to \cite{Touri:10a}. Let $(A_n)$ be a chain of stochastic matrices. For $n > k \geq 0$, following \cite{Touri:10a}, denote $A(n,k) = A_{n-1}A_{n-2}\ldots A_k$.
\begin{definition} \cite{Touri:10a}
  A chain $(A_n)$ of stochastic matrices is said to be \textit{ergodic} if for every $k \geq 0$, $\lim_{n \rightarrow \infty} A(n,k)$ exists and is equal to a matrix with identical rows.
\end{definition}
It is possible to show that occurrence of unconditional consensus in a multi-agent system is equivalent to ergodicity of the transition chain of the system. This is how unconditional consensus and ergodicity are related. Besides consensus, there is another important notion, multiple consensus, that constitutes our focus in this work.
\begin{definition}
  For a multi-agent system with dynamics (\ref{modelss}), unconditional multiple consensus occurs if for every $i$, $1 \leq i \leq s$, $\lim_{n \rightarrow \infty} X_i(n)$ exists, no matter at what instant or at what values states are initialized.
\end{definition}
To formulate multiple consensus as a property of chains of stochastic matrices, we introduce class-ergodicity, as follows.
\begin{definition}
A chain $(A_n)$ of stochastic matrices is \textit{class-ergodic} if $\forall k \geq 0$, $\lim_{n \rightarrow \infty} A(n,k)$ exists and can be relabeled as a block diagonal matrix with each block having identical rows. By relabeling, we mean applying the same permutation to rows and columns of a square matrix.
\end{definition}
Clearly, if $(A_n)$ in dynamics (\ref{modelss}) is class-ergodic, unconditional multiple consensus occurs. The converse is true also, by noting that the $i$th column of $A(n,k)$ is equal to $X(n)$ when $X$ is initialized at time $k$ by the initial value $e_i$ denoting all of the components equal to zero, but the $i$th one equal to 1. Therefore, unconditional multiple consensus occurs in a system with dynamics (\ref{modelss}) if and only if chain $(A_n)$ is class-ergodic.

In the rest of this section, we provide essential notions that are employed to obtain our main results.

\subsection{$l_1$-approximation \cite{Touri:10b}}

\begin{definition}
  Chain $(A_n)$ is said to be an $l_1$-approximation of chain $(B_n)$ if $\sum_{n=0}^{\infty} \| A_n-B_n \|$ is finite, where the norm refers to the \textit{max norm}, i.e., the maximum of the absolute values of the matrix entries.
\end{definition}
It is not difficult to show that $l_1$-approximation is an equivalence relation in the set of chains of row stochastic matrices.
\begin{proposition}\cite{Touri:10b}
  Let $(A_n)$ be an $l_1$-approximation of chain $(B_n)$. Then, $(A_n)$ is class-ergodic if and only if $(B_n)$ is.
  \label{partial ergodicity}
\end{proposition}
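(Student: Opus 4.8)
The plan is to show directly that if $(A_n)$ is an $l_1$-approximation of $(B_n)$, then for every fixed $k \geq 0$ the backward products $A(n,k)$ and $B(n,k)$ have the same limiting behavior, so that one converges (to a relabeled block-diagonal matrix with identical rows) exactly when the other does. By symmetry of the $l_1$-approximation relation (which the text already notes is an equivalence relation), it suffices to prove one implication, say that class-ergodicity of $(B_n)$ implies class-ergodicity of $(A_n)$.

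First I would fix $k$ and estimate the difference $D(n,k) := A(n,k) - B(n,k)$. Using the telescoping identity
\begin{equation}
A(n,k) - B(n,k) = \sum_{m=k}^{n-1} A(n,m+1)\,(A_m - B_m)\,B(m,k),
\label{telescope}
\end{equation}
and the fact that products of row-stochastic matrices are row-stochastic (hence have max-norm-type operator bounds independent of the number of factors), I would bound $\|D(n,k)\|$ by a constant times $\sum_{m=k}^{n-1} \|A_m - B_m\|$, which is finite and goes to $0$ as $k \to \infty$ in its tail. More carefully, since each entry of $A(n,m+1)$ lies in $[0,1]$ and each row of $B(m,k)$ sums to $1$, each entry of the $m$-th summand is bounded in absolute value by (a fixed multiple of) $\|A_m - B_m\|$; summing gives $\|A(n,k) - B(n,k)\| \leq c\sum_{m=k}^{\infty}\|A_m - B_m\| =: \varepsilon_k$, with $\varepsilon_k \to 0$ as $k \to \infty$. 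The key point is that this bound is \emph{uniform in $n$}.

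Next I would pass to the limit in $n$. Assume $(B_n)$ is class-ergodic, so $B_\infty^{(k)} := \lim_{n\to\infty} B(n,k)$ exists for each $k$ and is, after a common relabeling, block diagonal with identical rows within each block. I want to conclude $A(n,k)$ is Cauchy in $n$. Write $A(n,k) - A(n',k) = [A(n,k) - B(n,k)] + [B(n,k) - B(n',k)] + [B(n',k) - A(n',k)]$; the middle term is small for $n,n'$ large (Cauchy-ness of $B(\cdot,k)$), but the outer terms are only bounded by $\varepsilon_k$, not small for fixed $k$. To fix this I would use the semigroup relation $A(n,k) = A(n,k')A(k',k)$ for $k' > k$: apply the $\varepsilon_{k'}$ bound to the factor $A(n,k')$ (replacing it by $B(n,k')$ up to $\varepsilon_{k'}$), let $n\to\infty$ using convergence of $B(n,k')$, and then let $k' \to \infty$. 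This shows $\lim_{n\to\infty}A(n,k)$ exists and equals $\lim_{k'\to\infty} B_\infty^{(k')} A(k',k)$. Finally I would check that this limit inherits the relabeled-block-diagonal-with-identical-rows structure: the block structure is a zero-pattern that is stable under right-multiplication by the stochastic matrices $A(k',k)$ in the appropriate (eventually stabilized) partition, and the "identical rows within a block" property passes to the limit because it is preserved under the left action of $B_\infty^{(k')}$ on block-structured matrices.

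The main obstacle I anticipate is the last step — verifying that the limiting matrix for $(A_n)$ has exactly the block-diagonal-with-identical-rows form rather than merely being \emph{some} limit. The $\varepsilon_k$ bounds control \emph{how far} $A(n,k)$ is from $B(n,k)$ but do not by themselves force the class (communication-class) partition to coincide; one needs to argue that the relevant partition stabilizes as $k\to\infty$ and that the $l_1$-tail being summable prevents the classes from "merging" or "splitting" in the limit. I would handle this by working with the limit matrices directly: show the limit for $(A_n)$ differs from a relabeled block-diagonal matrix with identical rows by something of norm $\leq \varepsilon_k$ for every $k$, hence by $0$.
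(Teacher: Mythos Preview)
The paper does not prove Proposition~1; it is quoted from \cite{Touri:10b} without argument, so there is no in-paper proof to compare your attempt against.

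On the merits of your sketch: the telescoping identity and the resulting uniform-in-$n$ bound $\|A(n,k)-B(n,k)\|\le c\sum_{m\ge k}\|A_m-B_m\|=:\varepsilon_k\to0$ are correct, and your semigroup manoeuvre $A(n,k)=A(n,k')A(k',k)$ with $k'\to\infty$ does establish that $\lim_{n}A(n,k)$ exists whenever $\lim_{n}B(n,k)$ does. The gap you yourself flag---transferring the block-diagonal-with-identical-rows structure to $A_\infty^{(k)}$---is genuine, and your closing suggestion does not close it: the estimate $\|A_\infty^{(k)}-B_\infty^{(k)}\|\le\varepsilon_k$ is for \emph{fixed} $k$ and hence not arbitrarily small, while $B_\infty^{(k')}A(k',k)$ need not be block-diagonal even when $B_\infty^{(k')}$ is. In fact, with the paper's literal Definition~4 this step cannot be repaired in full generality: take $s=2$, $A_0=\left(\begin{smallmatrix}1/2&1/2\\0&1\end{smallmatrix}\right)$ and $A_n=I$ for $n\ge1$; this chain is an $l_1$-approximation of the identity chain (trivially class-ergodic), yet $\lim_nA(n,0)=A_0$ admits no relabeling of the required form. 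What \emph{does} go through via your argument is the ``identical rows within a class'' half of the structure; the block-diagonal zero-pattern half is precisely what requires the additional ergodicity-class input from \cite{Touri:10b}, and is also where the paper's stated form of the proposition is slightly loose.
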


\subsection{Absolute Infinite Flow \cite{Touri:3}}


\begin{definition}
  A chain $(A_n)$ of row stochastic matrices is said to have the absolute infinite flow property if
  \begin{equation}
       \sum_{n=0}^{\infty} \Big( \sum_{i \in T(n+1)} \sum_{j \in \bar{T}(n)} a_{ij}(n) + \sum_{i \in \bar{T}(n+1)} \sum_{j \in T(n)} a_{ij}(n)\Big) = \infty
    \label{abs}
  \end{equation}
  where $T(0),T(1),\ldots$ is an arbitrary sequence of subsets of $S$, $S = \{ 1,\ldots,s \}$, with the same cardinality, and $\bar{T}_i$ denotes the complement of $T_i$ in $S$.
Note that if $A_n$ is a matrix of order 1, i.e., $s=1$, then the absolute infinite flow property is trivially satisfied.
\end{definition}
In \cite{Touri:3}, the authors show that the absolute infinite flow property is a necessary condition for ergodicity. In addition, they prove necessity and sufficiency of the absolute infinite property in the case of chains of doubly stochastic matrices.

\subsection{Balanced Asymmetry}
\label{bal-asym}

\begin{definition}
  Consider a chain $(A_n)$ of stochastic matrices. Chain $(A_n)$ is said to be \textit{balanced asymmetric} if there exists an $M \geq 1$ such that for any two non empty subsets $S_1$ and $S_2$ of $S=\{1,\ldots,s\}$ with the same cardinality, we have
  \begin{equation}
    \sum_{i \in S_1} \sum_{j \in \bar{S}_2} a_{ij}(n) \leq M \sum_{i \in \bar{S}_1} \sum_{j \in S_2} a_{ij}(n), \,\,\,\,\,\forall n \geq 0,
    \label{b-asym}
  \end{equation}
  where the overbar indicates complementation.
\end{definition}
We provide the following non~trivial subclasses of balanced asymmetric chains:
\begin{enumerate}
  \item \textit{chains of doubly stochastic matrices:} It can be shown that all chains of doubly stochastic matrices are balanced asymmetric with $M=1$.

      \item Chains possessing the following two properties:\\
      \textit{self-confidence:} There exists $\delta > 0$ such that $a_{ii}(n) \geq \delta$ for every $i=1,\ldots,s$ and $n \geq 0$.\\
      \textit{cut-balance:} \cite{Hend:11} There exists $K \geq 1$, such that for every $E \subset \{1,\ldots,s\}$
  \begin{equation}
    \sum_{i \in E} \sum_{j \in \bar{E}} a_{ij}(n) \leq K \sum_{i \in \bar{E}} \sum_{j \in E} a_{ij}(n), \,\,\,\,\,\forall n \geq 0.
    \label{type-symmetry}
  \end{equation}
   Indeed, inequalities (\ref{type-symmetry}) and (\ref{b-asym}) are equivalent when $S_1$ is identical to $S_2$, while if $S_1 \neq S_2$, then $S_1 \cap \bar{S_2}$ and $\bar{S_1} \cap S_2$ are both non empty. As a result, and given the assumed self confidence property, both sums in inequality (\ref{b-asym}) are bounded below by $\delta$. In addition, both sums are bounded above by $s-1$ for any non empty $S_i$, $i =1,2$. Thus, the chain is balanced asymmetric with $M = \max\{K,(s-1)/\delta\}$. Note that the cut-balance property defined above, is the definition given in \cite{Hend:11} in the continuous time case. In \cite{Touri:11c}, chains having the cut-balance property are called balanced chains.
\end{enumerate}
\begin{remark}
  Balanced asymmetry is a stronger condition than cut-balance, although the latter together with self-confidence, becomes stronger than the former.
\end{remark}
\begin{remark}
  For those chains that are $l_1$-approximation of balanced asymmetric chains, the absolute infinite flow property is equivalent to:
  \begin{equation}
    \sum_{n=0}^{\infty} \sum_{i \in \bar{T}(n+1)} \sum_{j \in T(n)} a_{ij}(n) = \infty
    \label{abs-eq}
  \end{equation}
  for any sequence $T(n)$ of subsets of $T$ as in Eq. (\ref{abs}). This can be easily seen by combining relations (\ref{abs}) and (\ref{b-asym}).
\label{rmrk2}
\end{remark}

\subsection{Unbounded Interactions Graph \cite{Hend:11}}
\label{persistency}
The unbounded interactions graph of a chain is an important notion in this article, especially in class-ergodicity analysis. In the following, we define unbounded interactions graph of a chain of row stochastic matrices, which is the discrete time version of the definition given in \cite{Hend:11}.
\begin{definition}
  Let $(A_n)$ be a stochastic chain representing interaction coefficients of $s$ agents, where $S=\{ 1,\ldots,s\}$ is the set of agents. We form a directed graph $G_A=\{S,E\}$ with $(i,j) \in E$ if and only if $\sum_{n=0}^{\infty} a_{ij}(n) = \infty$. $G_A$ is called the \textit{unbounded interactions graph} of $A$.
\end{definition}
Taking into account that balanced asymmetry is a stronger condition than cut-balance, following a proof quite similar to that of Theorem 2 (b) in \cite{Hend:11}, one can establish the following proposition.
\begin{proposition}
  Let $(A_n)$ be stochastic chain with unbounded interactions graph $G_A$. If $(A_n)$ is balanced asymmetric, then every weakly connected component of $G_A$ is strongly connected.
\end{proposition}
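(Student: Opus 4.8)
The plan is to show that, under balanced asymmetry, a group of agents that receives no persistent influence from the outside also exerts no persistent influence on the outside; the statement then follows from a short graph-theoretic argument. Note that only the special case $S_1 = S_2$ of the balanced asymmetry inequality (\ref{b-asym}) will be used, which is precisely the cut-balance inequality (\ref{type-symmetry}); this is why the argument runs parallel to that of Theorem 2(b) in \cite{Hend:11}.

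First I would isolate the following claim: if $E$ is a non-empty proper subset of $S$ such that $\sum_{n=0}^{\infty} a_{ij}(n) < \infty$ for every $i \in \bar E$ and $j \in E$ (equivalently, no arc of $G_A$ enters $E$), then $\sum_{n=0}^{\infty} a_{ij}(n) < \infty$ for every $i \in E$ and $j \in \bar E$ as well (no arc of $G_A$ leaves $E$). To prove it, apply (\ref{b-asym}) with $S_1 = S_2 = E$, which is legitimate since $E$ and $\bar E$ are both non-empty and $|E| = |E|$; this gives $\sum_{i \in E} \sum_{j \in \bar E} a_{ij}(n) \le M \sum_{i \in \bar E} \sum_{j \in E} a_{ij}(n)$ for every $n \ge 0$. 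Summing over $n$ and using Tonelli's theorem on the non-negative right-hand side, the right-hand side becomes $M$ times a finite sum of convergent series, hence finite; therefore $\sum_{n=0}^{\infty} \sum_{i \in E} \sum_{j \in \bar E} a_{ij}(n) < \infty$, which forces $\sum_{n=0}^{\infty} a_{ij}(n) < \infty$ for each individual pair $i \in E$, $j \in \bar E$. This step is the technical core of the argument, and essentially its only delicate point is orienting the cut so that the direction of the inequality propagates ``finite flow in'' to ``finite flow out'' rather than the reverse, together with checking the non-emptiness hypotheses under which (\ref{b-asym}) is stated; the rest is routine bookkeeping.

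Finally I would argue by contradiction. Suppose some weakly connected component $C$ of $G_A$ is not strongly connected. Then the condensation of the subgraph of $G_A$ induced on $C$ is a directed acyclic graph with at least two vertices, so it has a \emph{source} vertex; let $V \subsetneq C$ be the corresponding strongly connected component of $G_A$. No arc of $G_A$ enters $V$ from $C \setminus V$, because $V$ is a source in the condensation, and no arc of $G_A$ joins $V$ with $S \setminus C$, because distinct weakly connected components are arc-disconnected; hence no arc of $G_A$ enters $V$ at all, while $V$ is a non-empty proper subset of $S$. By the claim, no arc of $G_A$ leaves $V$ either. Thus $V$ is disconnected, even in the underlying undirected graph, from the non-empty set $C \setminus V$, contradicting the weak connectedness of $C$. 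Therefore every weakly connected component of $G_A$ is strongly connected.
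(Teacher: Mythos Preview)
Your proof is correct and follows essentially the same route the paper intends: the paper does not give an explicit argument but observes that balanced asymmetry implies cut-balance (the $S_1=S_2$ case of (\ref{b-asym})) and then defers to the proof of Theorem~2(b) in \cite{Hend:11}, which is precisely the claim-plus-condensation argument you have spelled out in detail.
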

According to Proposition 2, under the balanced asymmetry condition, the unbounded interactions graph can be partitioned into strongly connected components, herein called \textit{islands}.


\section{Convergence Results}

Recalling the definition of $z_i(n)$'s from Part \ref{notations}, we first state a theorem on the limiting behavior of states in a multi-agent system associated with an $l_1$-approximation of a balanced asymmetric chain.
\begin{theorem}
  Consider a multi-agent system with dynamics (\ref{modelss}). Assume that chain $(A_n)$ is an $l_1$-approximation of a balanced asymmetric chain. Then, $\lim_{n \rightarrow \infty}z_i(n)$ exists for every $i \in S$.
  \label{result-bal}
\end{theorem}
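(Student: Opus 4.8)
\noindent\emph{Proof proposal.} The plan is to isolate a monotone Lyapunov-type functional for the dynamics and to recover convergence of the ordered values $z_i(n)$ from it.

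First, some bookkeeping for the $l_1$-perturbation: write $A_n=B_n+E_n$ with $(B_n)$ balanced asymmetric and $\sum_n\|E_n\|<\infty$. Row stochasticity of $B_n$ gives $\|B_nX(n)\|_\infty\le\|X(n)\|_\infty$, while $\|E_nX(n)\|_\infty\le s\|E_n\|\,\|X(n)\|_\infty$, so $\|X(n)\|_\infty\le\|X(0)\|_\infty\exp\!\big(s\sum_m\|E_m\|\big)=:C<\infty$; hence all states and all $z_i(n)$ stay in a fixed bounded interval. The key tool is an \emph{absolute probability sequence} for $(B_n)$: by Kolmogorov's theorem there is a sequence $(\pi(n))_{n\ge0}$ of stochastic vectors with $\pi(n)^{T}=\pi(n+1)^{T}B_n$. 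The crucial point -- and the step I expect to be the real obstacle -- is that balanced asymmetry forces $(\pi(n))$ to be \emph{uniformly positive}: $\pi_i(n)\ge\delta>0$ for all $i,n$, with $\delta$ depending only on $M$ and $s$. (For doubly stochastic chains one takes $\pi_i(n)\equiv 1/s$; a first step in the right direction is that (\ref{b-asym}) already forces every column sum of $B_n$ into $[1/M,M]$, by applying it with $S_1=S_2=\{j\}$ and with $S_1=S_2=S\setminus\{j\}$.) I would prove the lower bound by contradiction, applying (\ref{b-asym}) to the level sets $\{j:\pi_j(n+1)\text{ small}\}$ and propagating the estimate along the recursion $\pi_j(n)=\sum_i b_{ij}(n)\pi_i(n+1)$, or else by adapting the analysis of balanced chains in \cite{Touri:11c,Touri:3}.

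Granting uniform positivity: for each $c\in\mathbb R$ put $V_c(n):=\sum_{i\in S}\pi_i(n)\,|X_i(n)-c|$. Convexity of $t\mapsto|t-c|$, row stochasticity of $B_n$, and the identity $\pi(n)^{T}=\pi(n+1)^{T}B_n$ give $\sum_i\pi_i(n+1)\,|(B_nX(n))_i-c|\le\sum_j\pi_j(n)\,|X_j(n)-c|$, and the perturbation adds at most $\|E_nX(n)\|_\infty\le s\|E_n\|C=:\eta_n$ with $\sum_n\eta_n<\infty$; hence $V_c(n+1)\le V_c(n)+\eta_n$. Since $V_c(n)\ge0$, the sequence $V_c(n)-\sum_{m<n}\eta_m$ is non-increasing and bounded below, so $V_c(n)$ converges for every $c$. (The same computation with $t\mapsto(t-c)^2$ shows $\sum_i\pi_i(n)X_i(n)^2$ converges and, in particular, that $\sum_n\sum_i\pi_i(n+1)\sum_j b_{ij}(n)\big(X_j(n)-X_i(n+1)\big)^2<\infty$.)

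Finally, read off the ordered values. Let $\mu_n:=\sum_{i\in S}\pi_i(n)\delta_{X_i(n)}$, a probability measure in $[-C,C]$; convergence of $\int|x-c|\,d\mu_n$ for every $c$ forces $\mu_n$ to converge weakly to a probability measure $\mu$, and since each $\mu_n$ has at most $s$ atoms of mass $\ge\delta$, the limit $\mu$ is a finite sum $\sum_{l=1}^{r}m_l\delta_{y_l}$ with $y_1<\dots<y_r$, $m_l\ge\delta$, and gap $g:=\min_l(y_{l+1}-y_l)>0$. Consequently, for all large $n$ every $X_i(n)$ lies within a small $\varepsilon$ of some $y_l$, so the agents split into clusters $C_l(n)$ of sizes $c_l(n)$; if $r=1$ this is consensus and every $z_i(n)\to y_1$. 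If $r\ge2$ one argues, using (\ref{b-asym}) applied to the sets $C_l(n)$ and $S\setminus C_l(n)$ (together with the summability above and the bound $\delta$), that the numbers of agents entering and leaving each cluster at a given step are comparable and, combining with the fact that each cluster mass $\sum_{i\in C_l(n)}\pi_i(n)$ converges, that the cluster sizes $c_l(n)$ are eventually constant; an induction on $s$ applied inside each cluster (whose renormalized dynamics is again an $l_1$-approximation of a balanced asymmetric chain, on fewer than $s$ agents) then yields convergence of the ordered values within each cluster, and hence, by continuity of sorting, of $z_1(n)\le\dots\le z_s(n)$. The two genuinely delicate points are the uniform positivity of $(\pi(n))$ and the eventual constancy of the cluster sizes; everything else is routine bookkeeping.
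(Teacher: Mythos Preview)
Your route is genuinely different from the paper's, and it has two real gaps that you yourself flag but do not close.

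\textbf{What the paper does.} The paper never invokes absolute probability sequences, weak convergence of empirical measures, or any induction on cluster structure. It works directly with the ordered values. With $K=2M$, $L=z_s(0)-z_1(0)$, and $m'_n=\sum_{k<n}\|A_k-B_k\|$, it defines
\[
S_r(n)=\sum_{i=1}^r K^{-i}\bigl(z_i(n)+s\,m'_n L\bigr),\qquad r=1,\ldots,s,
\]
and shows, by a combinatorial computation using the balanced-asymmetry inequality~(\ref{b-asym}) together with the ordering $z_1\le\cdots\le z_s$, that
\[
S_r(n+1)-S_r(n)\ \ge\ K^{-s}\sum_{k=1}^{r-1}\Bigl(\sum_{i=k+1}^{s}\sum_{j=1}^{k} b_{i_{n+1}j_n}\Bigr)\bigl(z_{k+1}(n)-z_k(n)\bigr)\ \ge\ 0.
\]
Each $S_r$ is therefore non-decreasing and bounded, hence convergent; since $z_i=K^{i}(S_i-S_{i-1})-s\,m'_nL$ and $m'_n$ converges, every $z_i(n)$ converges. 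The geometric weights $K^{-i}$ are chosen so that the ``wrong-sign'' contributions coming from mass flowing across the $k$th cut are dominated, via~(\ref{b-asym}), by the ``right-sign'' contributions at neighbouring cuts.

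\textbf{The gaps in your proposal.} First, the uniform positivity $\pi_i(n)\ge\delta$ for an absolute probability sequence of a balanced asymmetric chain is asserted but not proved. Your remark that~(\ref{b-asym}) with $S_1=S_2=\{j\}$ bounds column sums is a start, but it does not yield a uniform lower bound on $\pi$; the suggested proof by contradiction on level sets of $\pi(n+1)$ is only a sketch, and adapting the Touri--Nedi\'c analysis is not automatic because balanced asymmetry and membership in their class $\mathcal P^\ast$ are, as the paper's Discussion notes, not known to be comparable. Without this bound, none of the subsequent steps go through, since your Lyapunov $V_c$ and your empirical measure $\mu_n$ both depend on $\pi$.

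Second, even granting uniform positivity, the passage from weak convergence of the $\pi$-weighted measure $\mu_n$ to convergence of the \emph{unweighted} order statistics $z_i(n)$ is not routine. Weak convergence tells you that for large $n$ every $X_i(n)$ lies near some limit atom $y_l$, and that the total $\pi$-mass near each $y_l$ stabilises; it does not tell you the \emph{number} of agents near each $y_l$ stabilises, because the weights $\pi_i(n)$ themselves vary with $n$. Your argument that cluster sizes are eventually constant relies on controlling cross-cluster flow via the summable quadratic, but making this rigorous requires knowing that an agent leaving a cluster produces a definite contribution to that sum, which in turn needs information about where the agent was before and after --- a circularity you have not broken. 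The final induction step also needs the restricted dynamics within a cluster to again be an $l_1$-approximation of a balanced asymmetric chain, which is not immediate.

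In short: the paper's weighted-partial-sum functional $S_r$ gives convergence of each $z_i$ in one stroke, directly from~(\ref{b-asym}), without any of the machinery you introduce; your approach may be salvageable, but as written both flagged steps are genuine holes, not just bookkeeping.
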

\begin{proof}
  To prove Theorem \ref{result-bal}, we use a technique similar to the one we adopted previously in proving Theorem 2 of \cite{Bolouki:ifac}. Note that this technique was also independently discovered by Hendrickx and Tsitsiklis (see \cite{Hend:11}). According to the definition of $z_i(n)$, we have $z_1(n) \leq z_2(n) \leq \cdots \leq z_s(n)$, $\forall n \geq 0$. Moreover, since states of agents are updated via a convex combination of their current states, $z_1(n)$ is a non-decreasing function of $n$, and $z_s(n)$ in a non-increasing function of $n$. Thus,
  \begin{equation}
    z_1(0) \leq z_i(n) \leq z_s(0), \,\,\, \forall i \in S.
  \end{equation}
  As a result, both $z_i(n)$ and $X_i(n)$ are bounded in a bounded interval, and defining $L \triangleq z_s(0)-z_1(0)$, we have:
  \begin{equation}
    X_i(n) - X_j(n) \leq L, \,\,\, \forall n \geq 0, \, \forall i,j \in S.
  \end{equation}
  Now, let $(B_n)$ be a balanced asymmetric chain that is an $l_1$-approximation of $(A_n)$. Let $A_n = B_n + M_n$, $\forall n \geq 0$. Denote $\|M_n\| \triangleq m_n$, $n \geq 0$, and $m'_n \triangleq \sum_{k=0}^{n-1} m_k$, $n > 0$ with $m'_0 = 0$. Note that $m'_n$ remains bounded, according to the definition of $l_1$-approximation. Set $K=2M$, and recalling $L \triangleq z_s(0)-z_1(0)$, define function $S_r(n)$ for every $r$, $1 \leq r \leq s$ by
  \begin{equation}
    S_r(n) \triangleq \sum_{i=1}^r K^{-i}(z_i(n) + sm'_nL).
    \label{S_r}
  \end{equation}
	In the following we show that $\lim_{n\rightarrow \infty}S_r(n)$ exists for every $r=1,\ldots,s$. Since $S_r$ is a linear combination of $z_i$'s with bounded coefficients, and $m'_n$ is bounded, it is bounded. Moreover,
  \begin{equation}
    S_r(n+1) - S_r(n) \geq K^{-s} \sum_{k=1}^{r-1} \left[ \left(\sum_{i=k+1}^{s} \sum_{j=1}^k b_{i_{n+1}j_n}\right) \left( z_{k+1}(n) - z_k(n) \right)\right] \geq 0
    \label{ee7}
  \end{equation}
	(see \cite{Bolouki:12b} for details). Hence, $S_r(n)$ is non decreasing. From boundedness and monotonic increasing behavior of $S_r$, we obtain that $\lim_{n\rightarrow \infty}S_r(n)$ exists for every $r=1,\ldots,s$. Furthermore, defining $S_0 \equiv 0$, Eq. (\ref{S_r}) implies
  \begin{equation}
    z_i(n) = K^i (S_i(n) - S_{i-1}(n))- sm'_iL.
  \end{equation}
  Thus, convergence of $z_i$'s is immediately implied from convergence of $S_i$, $S_{i-1}$, and $m'_i$.
\end{proof}
Convergence of $z_i(n)$'s in Theorem \ref{result-bal} implies that the set of accumulation points of agents' states is finite. In the next two theorems, we address the issues of unconditional consensus (ergodicity) and unconditional multiple consensus (class-ergodicity).


\begin{theorem}
  If chain $(A_n)$ is an $l_1$-approximation of a balanced asymmetric chain, then $(A_n)$ is ergodic if and only if it has the absolute infinite flow property.
  \label{ergodic}
\end{theorem}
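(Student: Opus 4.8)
The plan is to handle the two implications separately, with essentially all the work going into the ``if'' direction. For the ``only if'' direction, ergodicity implies the absolute infinite flow property with no further hypotheses — this is precisely the necessity result of \cite{Touri:3} quoted above — so nothing needs to be added there.

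For the converse, suppose $(A_n)$ has the absolute infinite flow property. First I would set up two reductions that bring the balanced-asymmetry structure into play. Write $A_n = B_n + M_n$ with $(B_n)$ balanced asymmetric and $\sum_n \|M_n\| < \infty$, exactly as in the proof of Theorem~\ref{result-bal}. Since the perturbation chain is $\ell_1$-summable, a termwise comparison shows that $(A_n)$ has the absolute infinite flow property if and only if $(B_n)$ does; and because $(B_n)$ is itself balanced asymmetric, Remark~\ref{rmrk2} rewrites its absolute infinite flow property in the one-sided form~(\ref{abs-eq}), i.e.\ $\sum_{n=0}^{\infty}\sum_{i\in\bar{T}(n+1)}\sum_{j\in T(n)} b_{ij}(n) = \infty$ for every sequence $(T(n))$ of equicardinal subsets of $S$.

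The crux is to squeeze quantitative information out of inequality~(\ref{ee7}) in the proof of Theorem~\ref{result-bal}. By that theorem each $z_i(n)$ converges, say $z_i(n)\to\zeta_i$ with $\zeta_1\le\cdots\le\zeta_s$, and in particular each $S_r(n)$ converges. Taking $r=s$ in~(\ref{ee7}) and summing over $n$, the left-hand side telescopes to the finite number $\lim_n S_s(n) - S_s(0)$, while every summand on the right is nonnegative; hence, writing $T_k(n) \triangleq \{1_n,\dots,k_n\}$ for the set of the $k$ lowest-valued agents at time $n$ (so that $\sum_{i=k+1}^{s}\sum_{j=1}^{k} b_{i_{n+1}j_n} = \sum_{i\in\bar{T}_k(n+1)}\sum_{j\in T_k(n)} b_{ij}(n)$), I obtain
\[
\sum_{n=0}^{\infty}\Big(\sum_{i\in\bar{T}_k(n+1)}\sum_{j\in T_k(n)} b_{ij}(n)\Big)\big(z_{k+1}(n)-z_k(n)\big) < \infty, \qquad k=1,\dots,s-1.
\]
Now I argue by contradiction: if $\zeta_k<\zeta_{k+1}$ for some $k$, then $z_{k+1}(n)-z_k(n)\ge(\zeta_{k+1}-\zeta_k)/2>0$ for all large $n$, so the displayed series forces $\sum_n\sum_{i\in\bar{T}_k(n+1)}\sum_{j\in T_k(n)} b_{ij}(n)<\infty$, contradicting the absolute infinite flow of $(B_n)$ applied to the equicardinal sequence $(T_k(n))$. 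Therefore $\zeta_1=\cdots=\zeta_s=:\zeta$. Since $z_1(n)\le X_i(n)\le z_s(n)$ for every $i$ and both ends tend to $\zeta$, all states converge to $\zeta$; because the initial time and the initial vector were arbitrary (running the same argument from an arbitrary time $k$ with initial vectors $e_1,\dots,e_s$ shows $\lim_n A(n,k)$ exists and has identical rows for every $k$), $(A_n)$ is ergodic.

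The step I expect to be the main obstacle is the passage through~(\ref{ee7}): one must recognize that the coefficient multiplying the gap $z_{k+1}(n)-z_k(n)$ there is exactly a cut flow of $(B_n)$ across a time-varying $k$-set, so that the summability inherited from the convergence of $S_s$ can be played off against the divergence guaranteed by absolute infinite flow. The remaining care — transferring the absolute-infinite-flow hypothesis from $(A_n)$ to the ``clean'' chain $(B_n)$ via the $\ell_1$ bound, and to its one-sided form via Remark~\ref{rmrk2} — is routine once this is in place.
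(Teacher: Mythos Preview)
Your proposal is correct and follows essentially the same route as the paper: invoke \cite{Touri:3} for necessity; for sufficiency, pass to the balanced asymmetric approximant $(B_n)$, use Theorem~\ref{result-bal} to get convergence of the ordered states and of $S_s$, then sum inequality~(\ref{ee7}) with $r=s$ and play the finiteness of $\lim_n S_s(n)-S_s(0)$ against the divergence guaranteed by the one-sided absolute infinite flow~(\ref{abs-eq}) along the equicardinal sequence $T_k(n)=\{1_n,\dots,k_n\}$ to force every gap $\zeta_{k+1}-\zeta_k$ to vanish. The only cosmetic difference is that the paper fixes one index $p$ with $Z_p<Z_{p+1}$ and derives unboundedness of $S_s$, whereas you phrase it as ``for each $k$ the series is finite, hence no gap can be positive''; the two are logically identical.
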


\begin{proof}
  The necessity of the absolute infinite flow property has been proved in \cite{Touri:3}. Here we show that if chain $(A_n)$ has the absolute infinite flow property together with being an $l_1$-approximation of a balanced asymmetric chain, then $(A_n)$ is ergodic, or equivalently, consensus occurs in system (\ref{modelss}), no matter at what instant or what values states are initialized. With no loss of generality, we assume that states are initialized at $n=0$ (Otherwise, if states are initialized at $n=n_0 \neq 0$, we remove the first $n_0$ term of $(A_n)$ and obtain another chain which is still an $l_1$-approximation of a balanced asymmetric chain and has the absolute infinite flow property, and proceed with the new chain).
Let $(B_n)$ be a balanced asymmetric chain with bound $M$ which is an $l_1$-approximation of $(A_n)$. It is straightforward to verify that chain $(A_n)$ has the absolute infinite flow property if and only if chain $(B_n)$ does. The main part of the proof is common with the proof of Theorem 1. According to Theorem 1, we know that $\lim_{n \rightarrow \infty} z_i(n)$ exists for every $i \in S$. Let us define $\forall i \in S$: $Z_i = \lim_{n \rightarrow \infty} z_i(n)$. From the definition of $z_i$'s, we have:
  \begin{equation}
    Z_1 \leq Z_2 \leq \cdots \leq Z_s
    \label{z inequality 2}
  \end{equation}
  Since $z_1(n)$ and $z_s(n)$ are respectively the least and the greatest values of states at time $n$, consensus occurs if and only if $Z_1 = Z_s$. Assume that this does not happen, or equivalently, $Z_1 < Z_s$. We wish to show that applying the absolute infinite flow property in inequality (\ref{ee7}) when $r=s$, leads to an unbounded $S_s(n)$, which would be a contradiction. Since $Z_1 < Z_s$, from inequalities (\ref{z inequality 2}) we conclude that there exists $p$, $1 \leq p \leq s-1$ such that $Z_p < Z_{p+1}$. If we define $\epsilon \triangleq (Z_{p+1}-Z_p)/2 > 0$, there exists $N \geq 0$ such that
  \begin{equation}
    z_{p+1}(n) - z_p(n) > \epsilon, \,\,\, \forall n \geq N
    \label{asd1}
  \end{equation}
  On the other hand, for balanced asymmetric chains, the absolute infinite flow property reduces to Eq. (\ref{abs-eq}). From Eq. (\ref{abs-eq}), we conclude that for any sequence $T(n)$ of subsets of $S$ of the same cardinality:
  \begin{equation}
    \sum_{n=N}^{\infty} \sum_{i \in \bar{T}(n+1)} \sum_{j \in T(n)} b_{ij}(n) = \infty
    \label{asd2}
  \end{equation}
  since $\sum_{n=0}^{N-1} \sum_{i \in \bar{T}(n+1)} \sum_{j \in T(n)} b_{ij}(n)$ is finite. If in Eq. (\ref{asd2}) we set $T(n) = \{ 1_n,2_n,\ldots,r_n \}$, we obtain
  \begin{equation}
    \sum_{n=N}^{\infty} \sum_{i=r+1}^{s} \sum_{j=1}^r b_{i_{n+1}j_n} = \infty
    \label{asd3}
  \end{equation}
  On the other hand, we note that according to Theorem 1, $\lim_{n \rightarrow \infty}S_r(n)$ exists for every $r = 1,\ldots,s$. Therefore, we can write
  \begin{equation}
    \lim_{n \rightarrow \infty} S_r(n) - S_r(0) = \sum_{n=0}^{\infty} \left(S_r(n+1) - S_r(n)\right)
    \label{asd4}
  \end{equation}
  Relations (\ref{asd4}) and (\ref{ee7}) yield:
  \begin{equation}
    \begin{array}{ll}
      & \hspace{-.25in}\lim_{n \rightarrow \infty} S_r(n) - S_r(0) \geq \sum_{n=0}^{\infty}\left\{ K^{-s} \sum_{k=1}^{r-1}\left[ \left(\sum_{i=k+1}^{s} \sum_{j=1}^k b_{i_{n+1}j_n}\right)\left( z_{k+1}(n) - z_k(n) \right)\right]\right\}\vspace{.05in}\\
      &\hspace{1.32in} =  K^{-s} \sum_{k=1}^{r-1} \left[ \sum_{n=0}^{\infty} \left( \sum_{i=k+1}^{s} \sum_{j=1}^k b_{i_{n+1}j_n}\right)\left( z_{k+1}(n) - z_k(n) \right) \right]
    \end{array}
  \end{equation}
  Setting $r=s$ we obtain
  \begin{equation}
    \lim_{n \rightarrow \infty} S_s(n) - S_s(0)
      \geq  K^{-s} \sum_{k=1}^{s-1} \left[ \sum_{n=0}^{\infty} \left( \sum_{i=k+1}^{s} \sum_{j=1}^k b_{i_{n+1}j_n}\right)
      \left( z_{k+1}(n) - z_k(n) \right) \right]
    \label{asd5}
  \end{equation}
  From the above inequality, recalling that $z_{k+1}(n) \geq z_k(n)$, and keeping only terms corresponding to $k=p$ and $n \geq N$ in the RHS, we obtain
  \begin{equation}
    \lim_{n \rightarrow \infty} S_s(n) - S_s(0)
      \geq K^{-s} \sum_{n=N}^{\infty} \left( \sum_{i=p+1}^{s} \sum_{j=1}^p b_{i_{n+1}j_n}\right)
      \left( z_{p+1}(n) - z_p(n) \right)
    \label{asd6}
  \end{equation}
  Inequalities (\ref{asd1}) and (\ref{asd6}) imply
  \begin{equation}
      \lim_{n \rightarrow \infty} S_s(n) - S_s(0)
      \geq \epsilon . K^{-s}\sum_{n=N}^{\infty} \sum_{i=p+1}^{s} \sum_{j=1}^p b_{i_{n+1}j_n}
    \label{asd7}
  \end{equation}
  From Eq. (\ref{asd3}) we know that the RHS of inequality (\ref{asd7}) is unbounded. Thus, the LHS is unbounded, and so is $S_s(n)$, which is a contradiction. This completes the proof.
\end{proof}


\begin{theorem}
  Let chain $(A_n)$ be an $l_1$-approximation of a balanced asymmetric chain. Then, $(A_n)$ is class-ergodic if and only if the absolute infinite flow property holds over each island of the unbounded interactions graph induced by $(A_n)$.
  \label{result-semi}
\end{theorem}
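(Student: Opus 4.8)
The plan is to reduce the class-ergodicity statement to the single-consensus statement of Theorem~\ref{ergodic} applied island by island, and then glue the pieces together using the $l_1$-approximation machinery of Proposition~\ref{partial ergodicity}. First I would observe that, by Proposition~\ref{partial ergodicity}, it suffices to prove the claim for a balanced asymmetric chain $(B_n)$ itself, since an $l_1$-approximation does not change either the set of islands of the unbounded interactions graph (the unbounded interactions graph depends only on which series $\sum_n a_{ij}(n)$ diverge, and that is invariant under $l_1$-perturbation) or the validity of the absolute infinite flow property over any fixed subset. So without loss of generality assume $(A_n)=(B_n)$ is balanced asymmetric with bound $M$, and let $I_1,\ldots,I_q$ be the islands, i.e. the strongly connected components of $G_A$ guaranteed by Proposition~2.

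For the ``if'' direction, suppose absolute infinite flow holds over each island. The key structural point to establish is that, after relabeling agents so that each island occupies a contiguous block of indices, the product $A(n,k)$ asymptotically decouples into the blocks $I_1,\ldots,I_q$: the mass that agent $i\in I_\ell$ assigns, through the backward product, to agents outside its own island must vanish as $n\to\infty$. This is where balanced asymmetry does the real work. If $i\in I_\ell$ and $j\notin I_\ell$, then $\sum_n a_{ij}(n)<\infty$, so by a Borel--Cantelli/tail-sum argument the cumulative flow from the block $\bigcup_{\ell'\le\ell}$ (or any union of islands) across its complement is summable; applying inequality~(\ref{b-asym}) with $S_1=S_2$ equal to such a union of islands, the reverse flow is also summable, so the union-of-islands partition is ``almost invariant'' for the chain. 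A standard argument (e.g.\ the one underlying Proposition~\ref{partial ergodicity}, or a direct estimate on $\|A(n,k)-\tilde A(n,k)\|$ where $\tilde A_n$ is $A_n$ with the inter-island entries zeroed out and the diagonal corrected) then shows $(A_n)$ is an $l_1$-approximation of the block-diagonal chain $(\tilde A_n)$. Each diagonal block $(\tilde A_n|_{I_\ell})$ is a stochastic chain that is balanced asymmetric (restriction preserves~(\ref{b-asym}) since test sets inside $I_\ell$ extend to test sets in $S$) and, by hypothesis, has the absolute infinite flow property; hence by Theorem~\ref{ergodic} each block is ergodic. Therefore $(\tilde A_n)$ is class-ergodic, and by Proposition~\ref{partial ergodicity} so is $(A_n)$.

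For the ``only if'' direction, suppose $(A_n)$ is class-ergodic but absolute infinite flow fails over some island $I_\ell$; I want a contradiction. Failure of absolute infinite flow over $I_\ell$ gives, by Remark~\ref{rmrk2} applied to the restricted chain, a sequence $T(n)\subseteq I_\ell$ of equal cardinality with $\sum_n\sum_{i\in I_\ell\setminus T(n)}\sum_{j\in T(n)}a_{ij}(n)<\infty$, and by balanced asymmetry the flow in the other direction across this cut is summable too. Since class-ergodicity forces the limit product restricted to $I_\ell$ to be a single block with identical rows (an island is strongly connected, so the unbounded interactions graph on $I_\ell$ is irreducible and cannot split further), the restricted chain on $I_\ell$ is ergodic; but ergodicity implies the absolute infinite flow property by the necessity half of Theorem~\ref{ergodic} (due to \cite{Touri:3}), contradicting the summability just derived. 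Hence absolute infinite flow must hold over every island.

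The main obstacle I anticipate is making the decoupling step fully rigorous: namely proving that summable inter-island flow actually implies $(A_n)$ is an $l_1$-approximation of the block-diagonal truncation $(\tilde A_n)$, rather than merely that individual off-block entries have summable series. One has to control all $s^2$ entries simultaneously and verify that zeroing the off-block entries and repairing the diagonal keeps each $\tilde A_n$ row-stochastic while changing each row by at most the total off-block mass of that row, which is summable in $n$ after summing the finitely many convergent series; and one must confirm that the within-island restrictions inherit balanced asymmetry with the \emph{same} constant $M$. I also want to double-check that ``every weakly connected component of $G_A$ is strongly connected'' (Proposition~2) is really what prevents an island from further decomposing in the limit — this is what ties the graph-theoretic notion of island to the analytic notion of a consensus class.
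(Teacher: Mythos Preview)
Your sufficiency (``if'') direction is essentially the paper's: zero out the inter-island entries to get a block-diagonal chain $(\tilde A_n)$, observe this is an $l_1$-perturbation of $(A_n)$ (Proposition~2 already guarantees that \emph{both} directions of inter-island flow are summable, so your appeal to~(\ref{b-asym}) there is redundant but harmless), apply Theorem~\ref{ergodic} to each block, and invoke Proposition~\ref{partial ergodicity}. The concerns you flag at the end about this step are legitimate but routine, and the paper glosses over them in much the same way.

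The necessity (``only if'') direction has a genuine gap. Your scheme---(i) the restricted chain on $I_\ell$ is ergodic, hence (ii) it has absolute infinite flow by the necessity result of~\cite{Touri:3}---is sound \emph{once}~(i) is established, but your justification for~(i), ``an island is strongly connected, so the unbounded interactions graph on $I_\ell$ is irreducible and cannot split further,'' is not a proof. Strong connectivity of $G_A|_{I_\ell}$ does not by itself prevent the consensus partition from cutting through $I_\ell$: knowing $\sum_n a_{ij}(n)=\infty$ for some pair $i,j\in I_\ell$ does not immediately force $\lim_n X_i(n)=\lim_n X_j(n)$ under class-ergodicity alone. This implication is precisely the nontrivial core of the necessity argument, and the paper supplies it by reusing the Lyapunov machinery of Theorems~\ref{result-bal} and~\ref{ergodic}. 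Namely, working with the block-diagonal truncation $(B_n)$, if some island $I$ were split by the limiting values into nonempty $I_1$ and $I\setminus I_1$, one chooses $p$ so that the ordered limits satisfy $Z_p<Z_{p+1}$ with the split occurring between levels $p$ and $p+1$; strong connectivity of $I$ then yields $\sum_n\sum_{i\in I\setminus I_1,\,j\in I_1} b_{ij}(n)=\infty$, and inequality~(\ref{asd7}) forces $S_s(n)\to\infty$, contradicting its boundedness. Only after this step is each island subchain known to be ergodic, at which point your appeal to~\cite{Touri:3} finishes the proof.
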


\begin{proof}
  To prove the sufficiency of the condition, we adopt the same technique as used in \cite{Touri:10b} and form a new chain $(B_n)$ of the bounded interactions graph $G_A$ by eliminating interaction coefficients between each agent within an island and agents of other islands at all times. From definition of $G_A$ and its islands, it is immediately implied that $(B_n)$ is an $l_1$-approximation of $(A_n)$, and consequently, is an $l_1$-approximation of a balanced asymmetric chain. According to Proposition \ref{partial ergodicity}, it suffices to prove that $(B_n)$ is class-ergodic. The system with $(B_n)$ as transition chain can be decomposed into subsystems corresponding to islands, as there is no communication between islands at all. It is straightforward to verify that each subchain of $(B_n)$ corresponding to a subsystem is balanced asymmetric and possesses the absolute infinite flow property. Thus, Theorem \ref{ergodic} implies that each subchain is ergodic, and as a result, $(B_n)$ is class-ergodic.

  We now prove the converse property. More specifically, we assume that $(A_n)$ is class-ergodic and also is an $l_1$-approximation of a balanced asymmetric chain, and prove that the absolute infinite flow property holds inside each island. Once again we form chain $(B_n)$ from $(A_n)$ by eliminating all interaction coefficients between agents of distinct islands. Since $(B_n)$ is an $l_1$-approximation of $(A_n)$, Proposition \ref{partial ergodicity} implies that $(B_n)$ is class-ergodic as well. Note that $(B_n)$ is also an $l_1$-approximation of a balanced asymmetric chain, as $(A_n)$ is. It is sufficient now to show that the absolute infinite flow property holds inside islands of the bounded interactions graph induced by chain $(B_n)$. Define subchains of $(B_n)$ corresponding to islands. We shall show that each island subchain is ergodic. Thus, consider an arbitrary initial state for each subsystem and by concatenating these states, form an initial vector $Y(0)$ for the original system:
  \begin{equation}
    Y(n+1) = B_n Y(n),\,\,\, n \geq 0.
    \label{final sys}
  \end{equation}
  Since $(B_n)$ is assumed class-ergodic, unconditional multiple consensus occurs in system (\ref{final sys}). Let $I$ be an arbitrary island. We wish to show that agents of $I$ belong to the same consensus cluster. Assume that on the contrary, there exists an island $I$ containing agents corresponding to distinct consensus clusters. We proceed with the exact same proof of Theorem \ref{ergodic}, identifying this time $Y$ with $X$ in the theorem, and taking advantage of inequality (\ref{asd7}) by setting $p$ as follows: since members of island $I$ do not belong to the same cluster, $I$ can be partitioned into non empty $I_1$ subsets and $\bar{I}_1$ such that
  \begin{equation}
    \lim_{n \rightarrow \infty} Y_i(n) < \lim_{n \rightarrow \infty} Y_j(n), \,\,\, \forall i \in I_1, \, j \in \bar{I}_1.
  \end{equation}
  Recalling that $(B_n)$ is an $l_1$-approximation of a balanced asymmetric chain, the ordered limits $\{Z_k\}_{k=1,\ldots,s}$ in Theorem \ref{result-bal} exist. Set $p$ equal to the maximum index $k$ such that
  \begin{equation}
    Z_k \leq \max \{ \lim_{n \rightarrow \infty}Y_i(n) | i \in I_1 \}
  \end{equation}
  and follow steps (\ref{asd4}) to (\ref{asd7}) in Theorem \ref{ergodic}. Since, by definition of the island $I$:
  \begin{equation}
    \sum_{n=0}^{\infty}\sum_{i\in \bar{I}_1,j\in I_1} b_{ij}(n) = \infty,
  \end{equation}
  the RHS of inequality (\ref{asd7}) is unbounded as in the proof of Theorem \ref{ergodic}, which is a contradiction. Therefore all agents contained in every island end up in the same consensus cluster. Since the initial state was arbitrary, we obtain that every subchain is ergodic. From ergodicity and balanced asymmetry of each subchain, we conclude that the absolute infinite flow property holds for each subchain, i.e., inside each island.
\end{proof}


As a result of Theorem \ref{result-semi}, the following result, stated and proved previously in \cite{Touri:11c}, provides a sufficient condition for class-ergodicity of a chain of row stochastic matrices. Recall definitions of self-confidence and cut-balance properties from Part \ref{bal-asym}.

\begin{theorem}
  If chain $(A_n)$ is an $l_1$-approximation of a self-confident and cut-balanced chain, it is also class-ergodic.
  \label{self-type}
\end{theorem}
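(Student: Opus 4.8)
The plan is to derive Theorem~\ref{self-type} as a straightforward corollary of Theorem~\ref{result-semi}, by verifying that the hypotheses line up. First I would recall that, by the second item in the list of subclasses of balanced asymmetric chains in Part~\ref{bal-asym}, a chain that is both self-confident (with constant $\delta$) and cut-balanced (with constant $K$) is balanced asymmetric with $M = \max\{K,(s-1)/\delta\}$. Hence an $l_1$-approximation of a self-confident and cut-balanced chain is, in particular, an $l_1$-approximation of a balanced asymmetric chain, so Theorem~\ref{result-semi} applies: $(A_n)$ is class-ergodic if and only if the absolute infinite flow property holds over each island of its unbounded interactions graph $G_A$.

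The crux of the argument is therefore to show that this island-wise absolute infinite flow condition is automatically satisfied under the stated hypotheses. Let $(C_n)$ be a self-confident, cut-balanced chain that is an $l_1$-approximation of $(A_n)$; note that $(A_n)$ and $(C_n)$ induce the same unbounded interactions graph $G_A$, since $\sum_n \|A_n - C_n\| < \infty$ forces $\sum_n a_{ij}(n) = \infty \Leftrightarrow \sum_n c_{ij}(n) = \infty$ for each pair $(i,j)$. Fix an island $I$ of $G_A$. By Proposition~2 (applied to $(C_n)$, which is balanced asymmetric hence cut-balanced), $I$ is a strongly connected component of $G_A$. I would then show the absolute infinite flow property holds over $I$ by a direct estimate: for any sequence $T(n) \subseteq I$ of subsets of fixed cardinality with $\emptyset \neq T(n) \neq I$, strong connectivity of $I$ guarantees the existence of an edge $(j,i)$ of $G_A$ with $j \in T(n_0)$ and $i \in \bar{I}_1$-side for infinitely many choices — more cleanly, one picks a fixed edge $(j_0,i_0)$ in $G_A$ crossing from the ``lower'' part to the ``upper'' part relative to any proper cut of $I$, using that a strongly connected graph has, for every proper nonempty vertex subset, an edge leaving it; since $\sum_n c_{i_0 j_0}(n) = \infty$ and (by self-confidence and boundedness of entries) the flow term is comparable to $c_{i_0 j_0}(n)$ up to multiplicative constants, the defining sum~(\ref{abs-eq}) over $I$ diverges. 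Combining this with Remark~\ref{rmrk2}, which says that for $l_1$-approximations of balanced asymmetric chains the absolute infinite flow property reduces to~(\ref{abs-eq}), gives absolute infinite flow over every island.

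With absolute infinite flow established over each island, Theorem~\ref{result-semi} immediately yields that $(A_n)$ is class-ergodic, completing the proof. The main obstacle I anticipate is the middle step: carefully arguing that strong connectivity of an island, together with self-confidence, forces the cut sum in~(\ref{abs-eq}) to diverge for every admissible subset sequence $T(n)$, rather than just for some fixed cut. The subtlety is that $T(n)$ may vary with $n$, so one cannot simply fix a single crossing edge; the clean way around this is to observe that there are only finitely many subsets of $I$, so some particular proper nonempty subset $T^\star$ occurs for infinitely many $n$, restrict attention to those $n$, and exploit that strong connectivity provides an edge of $G_A$ out of $T^\star$ whose weight is summably infinite, so that the corresponding subseries of~(\ref{abs-eq}) already diverges. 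Everything else is bookkeeping about $l_1$-approximation preserving the relevant structure, which the earlier propositions handle.
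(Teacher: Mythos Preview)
Your overall strategy matches the paper's: reduce to Theorem~\ref{result-semi} via the observation that self-confidence plus cut-balance implies balanced asymmetry, and then verify island-wise absolute infinite flow. The gap is in that verification. Your pigeonhole fix---choose a subset $T^\star$ recurring infinitely often and use a fixed $G_A$-edge $(i_0,j_0)$ crossing the cut $T^\star$---does not deliver divergence of~(\ref{abs-eq}). First, the term at time $n$ in~(\ref{abs-eq}) involves $\bar T(n+1)$, not $\bar T(n)$: even when $T(n)=T^\star$, you have no control over $T(n+1)$, so $i_0$ need not lie in $\bar T(n+1)$ and the edge $(i_0,j_0)$ may contribute nothing. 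Second, and independently, $\sum_n a_{i_0 j_0}(n)=\infty$ says nothing about the subseries over the times where $T(n)=T^\star$; all the mass of $a_{i_0 j_0}$ could sit on the complementary times. So ``the corresponding subseries of~(\ref{abs-eq}) already diverges'' is simply false in general.

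The paper closes this gap by a two-case split that you have not identified, and whose second case is precisely where self-confidence is used (not merely to get balanced asymmetry). Case~I: the sequence $T(n)$ is eventually constant, say $T(n)=T$ for all $n\ge N$; then $\bar T(n+1)=\bar T$ on the whole tail, a strong-connectivity edge $(p,q)$ with $p\in\bar T$, $q\in T$ gives $\sum_{n\ge N} a_{pq}(n)=\infty$, and this really is a subseries of~(\ref{abs-eq}). Case~II: $T(n)\neq T(n+1)$ for infinitely many $n$; since $|T(n)|=|T(n+1)|$, some agent lies in $T(n)\cap\bar T(n+1)$, and self-confidence forces the $n$-th term of~(\ref{abs-eq}) to be at least $\delta$ at each such time, so the sum diverges. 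Your proposal needs this dichotomy (or an equivalent device) in place of the pigeonhole step; everything else you wrote is correct bookkeeping.
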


\begin{proof}
  From Proposition \ref{partial ergodicity}, to prove class-ergodicity of $(A_n)$, we can assume that $(A_n)$ is self-confident and cut-balanced. These two properties of $(A_n)$ imply that $(A_n)$ is balanced asymmetric. Therefore, according to Theorem \ref{result-semi}, it suffices to show that the absolute infinite flow property holds over each island of the strong interaction digraph $G_A$. Let $I$ be an arbitrary island and $T(0),T(1),\ldots$ be an arbitrary sequence of subsets of $I$ with the same cardinality. Keeping in mind Remark \ref{rmrk2}, we consider the following two cases:

  Case I. The sequence $T(0),T(1),\ldots$ becomes invariant after a finite time, i.e., there exist $T \subset I$ and $N \geq 0$ such that $T(i)=T$ for every $n \geq N$. In this case,
  \begin{equation}
    \sum_{n=0}^{\infty} \sum_{i \in \bar{T}(n+1)} \sum_{j \in T(n)} a_{ij}(n) \geq \sum_{n=N}^{\infty} \sum_{i \in \bar{T}} \sum_{j \in T} a_{ij}(n).
    \label{ya khoda}
  \end{equation}
  Since $I$ is a strongly connected component of the unbounded interactions graph, there exist two agents $p \in \bar{T}$ and $q \in T$ such that $\sum_{n=0}^{\infty} a_{pq}(n) = \infty$. Consequently, $\sum_{n=N}^{\infty} a_{pq}(n)$ diverges and so does the RHS of inequality (\ref{ya khoda}). This proves the result.

  Case II. The sequence $T(0),T(1),\ldots$ does not converge, i.e., there exists a time subsequence $n_0,n_1,\ldots$ such that $T(n_k) \neq T(n_k+1)$ for every $k=0,1,\ldots$. Clearly,
  \begin{equation}
      \sum_{n=0}^{\infty} \sum_{i \in \bar{T}(n+1)} \sum_{j \in T(n)} a_{ij}(n) \geq \sum_{k=0}^{\infty} \sum_{i \in \bar{T}(n_k+1)} \sum_{j \in T(n_k)} a_{ij}(n_k).
    \label{ya khodaa}
  \end{equation}
  Since $T(n_k) \neq T(n_k+1)$ and the two subsets are of the same cardinality, there exists an agent that belongs to both $\bar{T}(n_k+1)$ and $T(n_k)$. Hence, due to self-confidence of chain $(A_n)$, we have
  \begin{equation}
   \sum_{i \in \bar{T}(n_k+1)} \sum_{j \in T(n_k)} a_{ij}(n_k) > \delta >0.
  \end{equation}
  Therefore, the RHS of inequality (\ref{ya khodaa}) diverges. This proves the result again in this case.
\end{proof}


\section{Discussion}


\subsection{Relationship to Previous Work}
Considering the body of the work on discrete time linear consensus algorithms and their convergence properties in the past decade, \cite{Hend:11} and \cite{Touri:10a,Touri:10b,Touri:3,Touri:11c} appear to provide the most general results. In the following, we compare our results to those of the mentioned papers in terms of generality.

In Theorem 2 of \cite{Hend:11} (main discrete time result of \cite{Hend:11}), the authors require the following three assumptions to establish unconditional multiple consensus in system (\ref{modelss}): (i) A uniform positive lower bound on positive interaction rates, (ii) Positive diagonal coefficients, (iii) Cut-balance (discrete time version).

The above assumptions are stronger than the ones made in our Theorem \ref{self-type}, itself a consequence of our main result, Theorem \ref{result-semi}. More specifically, the self-confidence property in Theorem \ref{self-type} is implied by assumptions (i) and (ii), and the cut-balance property in Theorem \ref{self-type} is an immediate result of assumptions (i) and (iii).

In papers \cite{Touri:10a,Touri:10b,Touri:3,Touri:11c}, there are several related results, mostly extended to random chains. Among all the related results, one can consider Corollary 4 and the deterministic counterpart of Theorem 4 in \cite{Touri:11c}, as the most general ones. Corollary 4 in \cite{Touri:11c} is as general as our current Theorem 4. However, it is difficult a priori to rank in terms of generality the deterministic counterpart of Theorem 4 of \cite{Touri:11c} and our main results here, namely Theorems \ref{ergodic} and \ref{result-semi}. To see this, we note that there are example systems covered by our theorems and not by those in \cite{Touri:11c}, and vice versa.

As an illustration, if we define chain $(A_n)$ by:
\begin{equation}
  A_n =
  \begin{bmatrix}
	  1/n & 1-1/n \\ 1-1/n & 1/n
	\end{bmatrix}, \,\,\, \forall n \geq 0
\end{equation}
then, $(A_n)$ is balanced assymmetric and has the absolute infinite flow property. Thus, it is ergodic according to (our) Theorem \ref{ergodic}. However, ergodicity is not implied from Corollary 4 in \cite{Touri:11c}, as the latter would require $(A_n)$ to be weakly aperiodic.

Conversely, consider time-invariant chain $(A_n)$ defined by
\begin{equation}
  A_n =
  \begin{bmatrix}
	  1/2 & 1/2 \\ 1 & 0
	\end{bmatrix}, \,\,\, \forall n \geq 0.
\end{equation}
It is not balanced asymmetric and therefore it is outside the reach of our theorems. However, from Corollary 4 in \cite{Touri:11c}, one can conclude that $(A_n)$ is ergodic, since it belongs to the thus-denoted set $\mathcal{P}^*$ (see \cite{Touri:11c}) and is weakly aperiodic.


\subsection{Relationship to Known Models}

We now apply our theorems to chains corresponding to different types of models and consensus algorithms found in the literature in order to analyze when their transition chains become ergodic or class-ergodic.

\subsubsection{Models with Finite Range Interactions}

The Krause model \cite{Krause:97} is an example of endogenous models with finite range interactions. These models are special cases of first order models in which interaction rates evolve endogenously. In these models, agent $i$ receives information from agent $j$ if and only if the distance between the two agents is less than some pre-specified value $R$. More specifically, starting from some non increasing function $f:\mathbb{R}^{\geq 0} \rightarrow \mathbb{R}^{\geq 0}$ vanishing at $R$, define interaction weight
\begin{equation}
 a_{ij} = \frac{f(|X_i-X_j|)}{\sum_{k=1}^s f(|X_i-X_k|)}
 \label{hoof}
\end{equation}
Note that in the Krause model, $f(y) = 1$ for $0 \leq x < R$ and $f(y) = 0$ elsewhere. It can be proved that in this case, the transition chain has self-confidence with $\delta = 1/s$, and is cut-balanced with $M=s$ (see Part IV-B of \cite{Bolouki:12} for details). Thus, according to Theorem \ref{self-type}, the chain is class-ergodic, i.e., unconditional multiple consensus occurs.


\subsubsection{The C-S model}

The C-S (Cucker-Smale) model \cite{Cucker:07} is an example of endogenous consensus models with interaction weights remaining strictly positive. We apply our results to a generalized version of the C-S model \cite{Cucker:07} that describes evolution of positions $X_i$'s and velocities $V_i$'s in a bird flock, in a three dimensional Euclidian space:
\begin{equation}
 \begin{cases}
  X_i(n+1) & \hspace{-.1in} = X_i(n) + h V_i(n),\\
  V_i(n+1) & \hspace{-.1in} = V_i(n) + \sum_{j\neq i} f (\|X_i(n) - X_j(n) \|) (V_j(n) - V_i(n)),
 \end{cases}
 \label{cucker-smale model}
\end{equation}
where $f : \mathbb{R}^{\geq 0} \rightarrow \mathbb{R}^{\geq 0}$ is a non increasing function. Note that in this model, the limiting behavior of velocities is of interest. The transition chain in this algorithm can be obtained by rewriting velocities' update equation in the form of Eq. (\ref{modelss}). 
Clearly, the transition chain is symmetric and so is cut-balanced. To enforce self-confidence, one may require an additional assumption, such as, $f(y) < 1/s$, $\forall y \geq 0$. Under this assumption, the self-confidence property is satisfied with $\delta = 1/s$. The combination of the self-confidence and cut-balance properties of the chain allows an application of Theorem \ref{self-type} to conclude that the chain is class-ergodic (unconditional multiple consensus), without any additional assumption.
\begin{theorem}
 For a system with $s$ agents evolving according to generalized C-S dynamics (\ref{cucker-smale model}), assume that $f(y)$ has the following property:
 \begin{equation}
  f(y) < 1/s, \,\, \forall s \geq 0
  \label{f-bound}
 \end{equation}
 Also assume that initial agents positions and velocities are such that
 \begin{equation}
  M_v < \frac{s}{3 h} \int_{M_x}^{\infty} f(y) dy.
  \label{assumption-disc}
 \end{equation}
 where $M_x$ and $M_v$ are maximum norms of respectively initial agent position vector differences, and initial agent velocity vector differences. Then, all agents' velocities converge to a common value. Moreover, the maximum distance between any two agents remains bounded by some number $R$ at all times.
 \label{cs_erg}
\end{theorem}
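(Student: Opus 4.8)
The plan is to exploit the structure already identified in the paragraph preceding the statement. Under the assumption (\ref{f-bound}), the velocity update matrix $A_n$, with entries $a_{ij}(n)=f(\|X_i(n)-X_j(n)\|)$ for $i\neq j$ and $a_{ii}(n)=1-\sum_{j\neq i}f(\|X_i(n)-X_j(n)\|)$, is row stochastic, symmetric (hence cut-balanced with $K=1$), and self-confident with $\delta=1/s$, since $\sum_{j\neq i}f(\cdot)<(s-1)/s$ forces $a_{ii}(n)>1/s$. Thus $(A_n)$ is balanced asymmetric and, by Theorem \ref{self-type}, class-ergodic, so each $V_i(n)$ converges. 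Moreover, symmetry of $A_n$ makes $\sum_i V_i(n)$ invariant, so the only possible common limit is $\bar V:=\tfrac1s\sum_i V_i(0)$. The remaining task, and the heart of the proof, is to upgrade class-ergodicity to ergodicity by proving that the positions stay inside a fixed ball; this is exactly where hypothesis (\ref{assumption-disc}) is used.

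First I would introduce the monitoring quantities $D_x(n):=\max_{i,j}\|X_i(n)-X_j(n)\|$ and $D_v(n):=\max_{i,j}\|V_i(n)-V_j(n)\|$. From the position equation, $X_i(n+1)-X_j(n+1)=(X_i(n)-X_j(n))+h(V_i(n)-V_j(n))$, hence $D_x(n+1)\le D_x(n)+hD_v(n)$. For the velocities I would use an ergodicity-coefficient estimate: for indices $i,k$, write the vector $(a_{ij}(n)-a_{kj}(n))_j$ as a difference $\beta^{+}-\beta^{-}$ of nonnegative vectors of common mass $1-\lambda_{ik}(n)$ with $\lambda_{ik}(n)=\sum_j\min(a_{ij}(n),a_{kj}(n))$; then $V_i(n+1)-V_k(n+1)=(1-\lambda_{ik}(n))(\tilde V^{+}-\tilde V^{-})$ with $\tilde V^{\pm}$ convex combinations of the $V_j(n)$, so $\|V_i(n+1)-V_k(n+1)\|\le(1-\lambda_{ik}(n))D_v(n)$. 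The key computation is $\lambda_{ik}(n)\ge s\,f(D_x(n))$: the diagonal terms give $\min(a_{ii},a_{ki})=\min(a_{ik},a_{kk})=f(\|X_i-X_k\|)\ge f(D_x(n))$ because $a_{ii},a_{kk}>1/s>f$, while each of the remaining $s-2$ terms equals $\min(f_{ij},f_{kj})\ge f(D_x(n))$. Consequently $D_v(n+1)\le(1-s\,f(D_x(n)))\,D_v(n)$, the factor being strictly positive since $sf<1$.

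Next I would combine these into a monotone functional. Set $\Psi(n):=\int_{M_x}^{D_x(n)}f(y)\,dy$; since $f$ is non-increasing, $\Psi(n+1)-\Psi(n)\le f(D_x(n))\,(D_x(n+1)-D_x(n))^{+}\le h\,f(D_x(n))\,D_v(n)$, while the velocity estimate gives $D_v(n+1)-D_v(n)\le -s\,f(D_x(n))\,D_v(n)$. Hence $\mathcal W(n):=D_v(n)+\tfrac sh\,\Psi(n)$ satisfies $\mathcal W(n+1)-\mathcal W(n)\le f(D_x(n))D_v(n)\big(\tfrac sh\cdot h-s\big)=0$, so $\mathcal W$ is non-increasing and $\int_{M_x}^{D_x(n)}f(y)\,dy\le \tfrac hs\mathcal W(0)=\tfrac hs D_v(0)$ for all $n$. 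With $D_v(0)\le 3M_v$ for the norm conventions in $\mathbb R^3$, hypothesis (\ref{assumption-disc}) gives $\int_{M_x}^{D_x(n)}f<\int_{M_x}^{\infty}f$ for every $n$. Since $r\mapsto\int_{M_x}^{r}f$ is non-decreasing with limit $\int_{M_x}^{\infty}f$, there is a finite $R<\sup\{y:f(y)>0\}$ with $\int_{M_x}^{R}f=\tfrac hs D_v(0)$, and the above forces $D_x(n)\le R$ for all $n$; in particular $f(R)>0$.

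Finally, $a_{ij}(n)\ge f(D_x(n))\ge f(R)>0$ for all $i\neq j$ and all $n$, so every flow sum appearing in (\ref{abs}) is bounded below by $f(R)$ at each step, i.e.\ $(A_n)$ has the absolute infinite flow property; being also balanced asymmetric, $(A_n)$ is ergodic by Theorem \ref{ergodic}. Therefore all $V_i(n)$ converge to a single value, which by invariance of $\sum_i V_i(n)$ equals $\bar V$, and $D_x(n)\le R$ for all $n$ is precisely the asserted uniform bound on inter-agent distances. I expect the main obstacle to be the sharp velocity-contraction estimate $D_v(n+1)\le(1-s\,f(D_x(n)))D_v(n)$: a cruder coefficient such as $1-2f(D_x(n))$ would fail to absorb the factor $\tfrac{s}{3h}$ in (\ref{assumption-disc}) once $s\ge 4$, so obtaining the full factor $s$ from counting all $s$ diagonal-and-off-diagonal contributions to $\lambda_{ik}$, together with tracking the $O(1)$ constants (norm conventions in $\mathbb R^3$, the exact coefficient in $\mathcal W$) so the hypothesis is exactly what the argument consumes, is the delicate part.
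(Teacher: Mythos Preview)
The paper does not actually supply a proof of this theorem; it only states that ``we employ a technique similar to that used in the proof of Theorem~\ref{ergodic}'' and defers the details to the external report \cite{Bolouki:12}. So the intended argument presumably runs through the sorted-state functionals $S_r(n)$ of Theorem~\ref{result-bal}--\ref{ergodic}, adapted to the endogenous C--S weights. Your proposal takes a genuinely different route: you bypass the $S_r$ machinery entirely and instead combine a Dobrushin ergodicity-coefficient contraction $D_v(n+1)\le(1-s\,f(D_x(n)))D_v(n)$ with the Lyapunov functional $\mathcal W(n)=D_v(n)+\tfrac{s}{h}\int_{M_x}^{D_x(n)}f$. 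This is closer in spirit to the classical Ha--Liu treatment of the continuous-time C--S model than to anything in the present paper, and it is arguably cleaner: once $\mathcal W$ is seen to be non-increasing, the position bound and then ergodicity via Theorem~\ref{ergodic} follow immediately. Your count $\lambda_{ik}(n)\ge s\,f(D_x(n))$ is correct and, as you note, the full factor $s$ (rather than $2$) is what makes the hypothesis~(\ref{assumption-disc}) sufficient for large $s$.

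One point deserves tightening. Your claim ``$D_v(0)\le 3M_v$ for the norm conventions in $\mathbb R^3$'' is both unnecessary and unjustified: with $D_v$ and $M_v$ both defined via the Euclidean norm of velocity differences, $D_v(0)=M_v$ exactly, since the Dobrushin contraction holds in any norm (the diameter of a convex hull equals the maximum pairwise distance). Your monotonicity of $\mathcal W$ then gives $\int_{M_x}^{D_x(n)}f\le \tfrac{h}{s}M_v$, and hypothesis~(\ref{assumption-disc}) yields $\tfrac{h}{s}M_v<\tfrac{1}{3}\int_{M_x}^{\infty}f<\int_{M_x}^{\infty}f$, so the argument closes with a factor $3$ to spare. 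In other words, your proof actually establishes the theorem under the \emph{weaker} assumption $M_v<\tfrac{s}{h}\int_{M_x}^{\infty}f$; the constant $3$ in~(\ref{assumption-disc}) is presumably an artefact of the less sharp $S_r$-based estimate the authors use in \cite{Bolouki:12}. You should simply drop the ``$3M_v$'' remark and state the sharper conclusion.
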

Theorem \ref{cs_erg} is not an immediate result of Theorems \ref{ergodic} and \ref{self-type}. However, to prove Theorem \ref{cs_erg}, we employ a technique similar to that used in the proof of Theorem \ref{ergodic}. See Part IV-C of \cite{Bolouki:12} for the proof.

The following corollary follows from Theorem \ref{cs_erg}:
\begin{corollary}
 For a multi-agent system with dynamics (\ref{cucker-smale model}) where $f(y) = K/(\sigma^2 + y^2)^{\beta}$ (the C-S model \cite{Cucker:07}), assume that $K / \sigma^{2\beta} < 1/s$. Then, under either condition (i) or (ii) in the following, agents' velocities asymptotically converge to a common value: (i) $\beta \leq 1/2$, (ii) $\beta > 1/2$ and $M_v < sK / (3h(2\beta -1)(M_x + \sigma)^{2\beta -1})$.
 \end{corollary}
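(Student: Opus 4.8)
The plan is to deduce the corollary directly from Theorem~\ref{cs_erg}, so the whole task reduces to checking that the specific choice $f(y) = K/(\sigma^2+y^2)^{\beta}$ satisfies the two hypotheses of that theorem: the pointwise bound (\ref{f-bound}), i.e.\ $f(y) < 1/s$ for all $y \geq 0$, and the initial-data condition (\ref{assumption-disc}), i.e.\ $M_v < \frac{s}{3h}\int_{M_x}^{\infty} f(y)\,dy$.

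First I would dispose of the pointwise bound. The function $y \mapsto K/(\sigma^2+y^2)^{\beta}$ is non-increasing on $[0,\infty)$ (which is also exactly the structural hypothesis the C--S model places on $f$), so it attains its supremum at $y=0$, where it equals $K/\sigma^{2\beta}$. The standing assumption $K/\sigma^{2\beta} < 1/s$ then gives $f(y) \leq f(0) = K/\sigma^{2\beta} < 1/s$ for every $y \geq 0$, which is precisely (\ref{f-bound}).

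Next I would handle the integral condition, splitting into the two stated cases. In both cases the key elementary estimate is $\sigma^2 + y^2 \leq (\sigma+y)^2$, whence $f(y) \geq K/(\sigma+y)^{2\beta}$ for all $y \geq 0$. In case (i), $\beta \leq 1/2$, so $2\beta \leq 1$ and $\int_{M_x}^{\infty} K/(\sigma+y)^{2\beta}\,dy = \infty$; hence $\int_{M_x}^{\infty} f(y)\,dy = \infty$ and (\ref{assumption-disc}) holds trivially for any finite $M_v$. In case (ii), $\beta > 1/2$, so $2\beta - 1 > 0$ and an elementary antiderivative gives $\int_{M_x}^{\infty} K/(\sigma+y)^{2\beta}\,dy = K/\big((2\beta-1)(M_x+\sigma)^{2\beta-1}\big)$. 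Combining this with the lower bound on $f$ yields $\frac{s}{3h}\int_{M_x}^{\infty} f(y)\,dy \geq sK/\big(3h(2\beta-1)(M_x+\sigma)^{2\beta-1}\big)$, so the corollary's hypothesis $M_v < sK/\big(3h(2\beta-1)(M_x+\sigma)^{2\beta-1}\big)$ implies (\ref{assumption-disc}).

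With both hypotheses of Theorem~\ref{cs_erg} verified in each case, that theorem gives convergence of all agents' velocities to a common value, which is the assertion of the corollary. There is essentially no obstacle here; the only mildly delicate point is choosing a comparison function for $\int_{M_x}^{\infty}(\sigma^2+y^2)^{-\beta}\,dy$ that is simultaneously loose enough to integrate in closed form and tight enough to reproduce the clean threshold stated in part (ii), and the bound $\sigma^2+y^2 \leq (\sigma+y)^2$ is exactly what does this.
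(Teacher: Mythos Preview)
Your proposal is correct and matches the paper's approach: the paper simply asserts that the corollary follows from Theorem~\ref{cs_erg} without giving details, and your argument is precisely the natural verification of the two hypotheses (\ref{f-bound}) and (\ref{assumption-disc}) for the specific Cucker--Smale weight $f(y)=K/(\sigma^2+y^2)^\beta$. The comparison bound $\sigma^2+y^2 \leq (\sigma+y)^2$ you use to handle the integral is exactly what is needed to recover the stated threshold in case~(ii).
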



\subsubsection{The JLM model}

Similary, and without any additional assumptions, based on Theorem \ref{self-type}, it can be shown that in the JLM model \cite{Jadba:03}, multiple consensus occurs. Moreover, as the JLM model is balanced asymmetric, Theorem \ref{ergodic} gives a necessary and sufficient condition for unconditional consensus, although exponential convergence is not guaranteed (see \cite{Bolouki:12} for details).


\section{Conclusion}

In this note, we have focused on a class of linear distributed averaging algorithms in discrete time, such that the underlying non homogeneous update Markov chain satisfies a property called balanced asymmetry. Under the balanced asymmetry assumption, we established that, asymptotically, states of agents involved in the consensus algorithm keep taking their values within a fixed set of limiting values of cardinality at most $s$.

Furthermore, considering the graph of unbounded interactions and its islands as introduced by Hendrickx et al \cite{Hend:11} for continuous time consensus algorithms, under the balanced asymmetry assumption, we established a necessary and sufficient condition for the above limits to become that of individual agent states; the number of potential consensus clusters is equal to the number of islands, and consensus over an island occurs if and only if the so-called absolute infinite flow property (Touri and Nedi\'c \cite{Touri:3}) holds on that island. Finally, we displayed the applicability of our results to a number of well-known consensus models in the literature. In future work, we shall investigate the impact of the number of agents increasing to infinity on all of our results.


\end{document}